\newtheorem{thm}{Theorem}[section]
\newtheorem{lem}[thm]{Lemma}
\newtheorem{cor}[thm]{Corollary}
 \def\d{{\rm d}}
 \def\R{{\mathbb R}}
\title[Minimal periods of ODEs in Banach spaces]{Minimal periods for ordinary differential equations in strictly convex Banach spaces and explicit bounds for some $\ell^p$-spaces}
 \author{Michaela A. C. Nieuwenhuis}
 \address{OxPDE, Mathematical Institute, Radcliffe Observatory Quarter, Woodstock Road, Oxford OX2 6GG, UK.}
 \email{Michaela.Nieuwenhuis@maths.ox.ac.uk}
 \author{James C. Robinson}
 \address{Mathematics Institute, Zeeman Building, University of Warwick, Coventry CV4 7AL, UK.}
 \email{J.C.Robinson@warwick.ac.uk}
\author{Stefan Steinerberger}
\address{Mathematisches Institut, Universit\"at Bonn, Endenicher Allee 60, 53115 Bonn, Germany}
\email{steinerb@math.uni-bonn.de}
\keywords{minimal periods, Wirtinger's inequality, strictly convex Banach spaces}
\subjclass[2010]{34C25, 35A23, 47A30, 52A21, 52A40}
\begin{document}
\begin{abstract}
Let $ x(t)$ be a non-constant $T$-periodic solution to the ordinary differential equation $\dot{ x}=f(x)$ in a Banach space $X$, where $f$ is assumed
to be Lipschitz continuous with constant $L$. Then there exists a constant $c$ such that $TL\geq c$, with $c$ only depending on $X$. It is known that
$c\ge 6$ in any Banach space and that $c=2\pi$ in any Hilbert space, but whereas the bound of $c=2\pi$ is sharp in any Hilbert space,
there exists only one known example of a Banach space such that $c=6$ is optimal. In this paper, we show that the inequality $TL\ge 6$ is in fact strict in any strictly convex Banach space.
Moreover, we improve the lower bound for $\ell^p(\mathbb{R}^n)$ and $L^p(M,\mu)$ for a range of $p$ close to $p=2$ by using a form of Wirtinger's inequality for functions in $W^{1,p}_{per}([0,T],L^p(M,\mu))$.
\end{abstract}


\maketitle

\section{Introduction}
Consider the ordinary differential equation $\dot{x}=f(x)$ in a Banach space $X$, where $f$ is Lipschitz continuous with constant $L$, that is for any $x,y\in X$
$$
\|f(x)-f(y)\|_X\leq L\|x-y\|_X.
$$
In this case one can relate the period $T$ of any non-constant periodic orbit to the Lipschitz constant $L$ via the inequality $TL\ge c$.
In 1969, Yorke \cite{yor} proved that $c=2\pi$ when $X=\mathbb{R}^n$ with its usual norm. Lasota \& Yorke \cite{lasyor} showed that the proof extends to arbitrary Hilbert spaces and they
proved the bound $c = 4$ for any Banach space. This was improved to $c=4.5$ by Busenberg \& Martelli \cite{busmar} and finally to $c=6$ by Busenberg, Fisher \& Martelli \cite{busfishmar}
who also gave another proof for $c=2\pi$ in any Hilbert space using Wirtinger's inequality. An obvious extension of the simple two-dimensional example
$$
\dot x=Ly\qquad\dot y=-Lx
$$
shows that $c=2\pi$ is sharp in any Hilbert space. Busenberg, Fisher \& Martelli \cite{busfishmar2} also constructed an example of an ODE on a periodic orbit of period $1$, which when viewed as a subset of
$L^1([0,1]^2)$ has Lipschitz constant $L=6$, showing that $c=6$ is sharp for general Banach spaces.\\

However, some interesting questions about minimal periods remain unanswered. Does there exist an ODE in a finite-dimensional Banach space such
that the lower bound of $TL=6$ is obtained? Does $TL\geq 2\pi$ characterise Hilbert spaces? Is there a (non-Hilbert) Banach space for which $c> 6$? 

The results in this paper address this last question. First we show that in strictly convex Banach spaces necessarily $TL>6$. For these normed topological vector spaces the unit ball is a strictly
convex set. It is easy to see that the unit balls in $\ell^1$ and $\ell^\infty$ contain a line segment and are therefore not strictly convex
sets whereas the unit balls for all $1<p<\infty$ are strictly convex. This result nicely complements the current theory because the only example
for a Banach space with $c=6$ is $L^1$.\\

However, we prove not only that the inequality is strict in any strictly convex Banach space but we are also able to push the bound a little further
for the simplest family of interesting finite-dimensional Banach spaces, namely $\ell^p(\R^n)$, that is $\mathbb{R}^n$ equipped with the $\ell^p$-norm,
$$
\|(x_1,\ldots,x_n)\|_{\ell^p}=\left(\sum_{j=1}^n|x_j|^p\right)^{1/p}.
$$

It is remarkable that even for Euclidean spaces with the family of $\ell^p$-norms the optimal constant is not known\footnote{Unfortunately there appears to be an error in one of the integral calculations in the paper by Zevin \cite{zev} which claims to show that $c=2\pi$ in $\ell^\infty(\R^n$).} for $p\neq 2$.
Our second contribution in this paper is to point out that by using a generalised form of Wirtinger's inequality, one can find explicit bounds on $c$ which are strictly larger than $6$ in a range of $\ell^p$-spaces near $p=2$ ($1.43\lesssim p\lesssim 3.35$). A similar argument also works in the infinite-dimensional Lebesgue spaces $L^p(M,\mu)$.

 We should mention the interesting related result, due to Zevin \cite{zev2}, that if $X$ is a finite-dimensional Banach space and one considers the second order equation $\ddot x=f(x)$ with $f:X\to X$ Lipschitz with constant $L^2$, then $TL\ge 2\pi$ independent of the space $X$. (The paper \cite{zev2} claims a similar result for the first order equation $\dot x=f(x)$, but there is a small error in the proof of equation (11). Nevertheless, Zevin's argument readily yields the result we have stated for $\ddot x=f(x)$.)

\section{Minimal periods in strictly convex Banach spaces}
Let us start this section by stating the main result of this paper:
\begin{thm}\label{main result}
Let $X$ be a strictly convex Banach space. Then
$$ TL > 6.$$
\end{thm}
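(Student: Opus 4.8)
The plan is to revisit the existing argument establishing the non-strict bound $TL \ge 6$ and identify where strict convexity forces a strict inequality. The starting point is the standard setup: for a non-constant $T$-periodic solution $x(t)$, one works with the derivative $\dot x(t) = f(x(t))$, which is itself $T$-periodic, and exploits the Lipschitz bound $\|\dot x(t) - \dot x(s)\| = \|f(x(t)) - f(x(s))\| \le L\|x(t) - x(s)\|$. The proof of $c = 6$ in \cite{busfishmar} proceeds by a clever choice of test points on the orbit and estimates of the total variation or of certain difference quotients around the loop; my first step would be to reconstruct that chain of inequalities in full, paying attention to every place where one estimates the norm of a sum of two vectors by the sum of the norms, i.e. every application of the triangle inequality $\|u + v\| \le \|u\| + \|v\|$.

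The key observation is that the bound $TL \ge 6$ is obtained as a chain of inequalities, and strict convexity of $X$ is precisely the condition under which the triangle inequality $\|u+v\| = \|u\| + \|v\|$ holds \emph{only} when $u$ and $v$ are non-negative multiples of a common vector (equivalently, equality forces $u, v$ to be parallel and similarly oriented). So the second step is to argue that if $TL = 6$ exactly, then equality must hold throughout the chain, and in particular every triangle inequality used must be an equality. Tracing these equality conditions back through the construction should show that the relevant difference vectors along the orbit are all forced to be parallel. I would then aim to derive a contradiction: either the orbit degenerates to a line segment (traversed back and forth), which cannot be a non-constant \emph{periodic} solution of an autonomous $\dot x = f(x)$ with Lipschitz $f$ because of uniqueness/monotonicity considerations, or the forced collinearity contradicts the orbit being a genuine closed curve.

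Concretely, I expect the argument to hinge on symmetric test points: one typically picks three or six equally spaced points (in arclength or in time) around the orbit and writes the diameter or a chord length as a telescoping sum of consecutive chords, applying the triangle inequality to each link. Strict convexity converts the borderline case into the statement that consecutive chord vectors are positively proportional, which chains into the statement that all the sample points are collinear. Since a nonconstant periodic orbit is a closed curve that is not contained in a line (a line segment traversed periodically would require $x$ to reverse direction, forcing $\dot x$ to vanish at the turning points and hence, by autonomy and the Lipschitz property, forcing those points to be equilibria, contradicting non-constancy), this yields the strict inequality $TL > 6$.

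The main obstacle I anticipate is the bookkeeping of the equality case: strict convexity gives a clean dichotomy for a single application of the triangle inequality, but the $c=6$ proof may combine several such inequalities together with an integration (an averaging argument over $t$), and equality in an \emph{integrated} inequality only forces equality almost everywhere. Thus I would have to argue that the pointwise equality conditions hold on a set of full measure and then upgrade this to a rigid geometric conclusion about the whole orbit using continuity of $x(t)$. Handling this measure-theoretic-to-geometric upgrade carefully, and making sure the degenerate collinear configuration is genuinely excluded for an autonomous Lipschitz ODE, is where I expect the real work to lie.
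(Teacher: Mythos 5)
Your strategy coincides with the paper's in its main thrust, but the two arguments part ways at the endgame, and the difference is worth recording. Like the paper, you locate the slack in the Busenberg--Fisher--Martelli chain at the applications of the triangle inequality and use the equality case of strict convexity to force collinearity of the relevant difference vectors; the paper does this for the single inequality in its integral lemma, namely pulling the norm inside the $r$-integral of $\dot{y}(s+tr/T)-\dot{y}(s+tr/T-r)$, with the weight $(T-t)t/T^2$ vanishing at $t=0,T$. Your telescoping-chords picture is essentially the paper's Riemann-sum discretization $b_i-c_i$, and your worry about almost-everywhere versus pointwise equality is resolved exactly as you anticipated, by continuity of both sides in $(s,t)$. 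Two points of comparison. First, a delicate step your sketch compresses into ``chains into collinearity'': the direction $\mathbf{v}$ produced by strict convexity a priori depends on the fixed pair $(s,t)$, and the paper needs a separate small-$t$ perturbation argument, together with periodicity and $\int_0^T\dot{y}\,\d r=\mathbf{0}$, to promote this to the global statement $\dot{y}(t)=h(t)\mathbf{v}$ with $h$ scalar, continuous and mean-zero; a complete write-up of your plan must supply this too. Second, and this is the genuine divergence: to exclude the degenerate one-dimensional case, you invoke the ODE structure --- a periodic orbit confined to a segment must reverse direction, the turning points satisfy $f(x(t_0))=\mathbf{0}$, and Picard--Lindel\"of uniqueness (valid for Lipschitz $f$ in any Banach space) forces $x$ to be constant. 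That argument is correct and more elementary, but it only proves strictness for solutions of $\dot{x}=f(x)$. The paper instead stays at the level of an arbitrary non-constant periodic $y$: since $h$ is continuous with zero mean it vanishes at some point $s$, and letting $t\to 0$ in the equality condition forces $\int_0^T|h(s-r)|\,\d r=0$, i.e.\ $h\equiv 0$. This buys a strict version of the integral inequality for \emph{every} non-constant $T$-periodic function in a strictly convex space (Lemma \ref{BanachLemma}), a self-contained statement of independent interest from which Theorem \ref{main result} follows by the same division trick you describe; your version trades that generality for a shorter kill of the degenerate case.
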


In fact the proof of this statement is a refinement of an integral inequality originally introduced by Busenberg, Martelli \& Fisher \cite{busfishmar}. The revised
version of the result is summarised in the following lemma.
\begin{lem}
\label{BanachLemma}
Let $X$ be a normed space and $y:\mathbb{R}\rightarrow X$ be a continuous, $T$-periodic function such that $ \left\Vert\dot{y}(t)\right\Vert$ is integrable. Then $$\int_0^T \int_0^T \left\Vert y(t)-y(s) \right\Vert \d s\; \d t \leq \frac{T}{6} \int_0^T \int_0^T  \left\Vert\dot{y}(t)-\dot{y}(s)\right\Vert \d s \;\d t.$$
If $X$ is a strictly convex Banach space, then the above inequality is in fact strict.
\end{lem}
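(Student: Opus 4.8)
The engine I would use is an exact kinematic identity obtained from the fundamental theorem of calculus together with $T$-periodicity: for $0\le s\le t\le T$,
\[
y(t)-y(s)=\frac1T\int_s^t\!\int_t^{\,s+T}\big[\dot y(r)-\dot y(\rho)\big]\,\d\rho\,\d r ,
\]
where $r$ runs over the forward arc $[s,t]$ and $\rho$ over the complementary backward arc $[t,s+T]$; the mean of $\dot y$ cancels, which is why only the \emph{differences} $\dot y(r)-\dot y(\rho)$ appear. Taking norms and integrating over $(s,t)\in[0,T]^2$ produces a double integral of $\|\dot y(r)-\dot y(\rho)\|$ against an explicit kernel, and the non-strict inequality is precisely the integral inequality of Busenberg, Fisher \& Martelli recalled in the introduction, which I would take as established. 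I would stress one point that is invisible at first sight but decisive for the second half: the sharp constant $1/6$ is \emph{not} produced by the crude triangle-inequality estimate of the kernel (that only yields $TL\ge4$); it rests on the genuine vector cancellation inside the integral above, and it is exactly this cancellation that strict convexity will sharpen.

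For the strict inequality I would analyse the equality case. The geometry of $X$ enters the sharp bound only through estimates of norms of vector integrals, i.e. through (integrated) triangle inequalities $\|a+b\|\le\|a\|+\|b\|$. In a strictly convex space these are rigid: $\|a+b\|=\|a\|+\|b\|$ with $a,b\ne0$ forces $b=\lambda a$, $\lambda>0$, and likewise $\|\int g\|=\int\|g\|$ forces $g$ to point a.e. in one fixed direction. Hence, if equality held in the Lemma, chasing the equalities back through the proof would force the velocity differences $\dot y(r)-\dot y(\rho)$ to be non-negative multiples of a single unit vector $u$ for almost every pair entering the estimate.

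The final step is to promote this local collinearity to a global statement and to extract the consequence. Fixing a base time $r_0$ and letting $\rho$ vary gives $\dot y(r)-\dot y(r_0)\in\mathbb{R}u$ for a.e. $r$; integrating and imposing $y(T)=y(0)$ forces $\dot y(r_0)\in\mathbb{R}u$ as well, so the whole orbit lies on the line $\{y_0+\lambda u:\lambda\in\mathbb{R}\}$. On such a line the norm is just $|\cdot|$ and one is back in the one-dimensional situation, where the triangular ``tent'' profile attains equality; strict convexity therefore excludes equality for every orbit that is not confined to a line. I expect the main obstacle to be this equality analysis: first, pinning down the cancellation that is responsible for the sharp constant (so that one knows \emph{which} inequality must be an equality), and second, upgrading ``collinear for almost every pair'' to a single global direction $u$ while controlling the exceptional null sets and using only the continuity of $y$. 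For the application in the Theorem this degenerate collinear motion is harmless, since a back-and-forth motion on a segment cannot be a trajectory of an autonomous Lipschitz field (the field would be two-valued at interior points), so one indeed obtains $TL>6$.
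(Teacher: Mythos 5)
Your kinematic identity is correct, but the route from it to the constant $T/6$ fails, and this undermines the whole equality analysis. Taking norms in your identity for each fixed pair and integrating (substitute $t=s+u$, $r=s+\alpha$, $\rho=s+\beta$ and use periodicity) gives
\[
\int_0^T\!\!\int_0^T\|y(t)-y(s)\|\,\d s\,\d t\;\le\;\frac1T\int_0^T c\,(T-c)\,\Phi(c)\,\d c,
\qquad \Phi(c):=\int_0^T\|\dot y(\sigma+c)-\dot y(\sigma)\|\,\d\sigma,
\]
and the kernel $c(T-c)/T$ has maximum $T/4$, not $T/6$; indeed $\frac1T\int_0^T c(T-c)\Phi(c)\,\d c\le\frac T6\int_0^T\Phi(c)\,\d c$ is simply false for the relevant $\Phi$ (for the tent function with $T=1$ your bound evaluates to $5/24$ while the lemma's right-hand side is $4/24$, which the tent attains exactly). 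The paper's proof (following Busenberg--Fisher--Martelli) rests on a \emph{different} identity, the rescaled one
\[
y(s+t)-y(s)=\frac{t(T-t)}{T^2}\int_0^T\Bigl[\dot y\bigl(s+\tfrac{tr}{T}\bigr)-\dot y\bigl(s+\tfrac{tr}{T}-r\bigr)\Bigr]\,\d r,
\]
in which, for each fixed $(s,t)$, the gap $r$ sweeps uniformly over $[0,T]$; after the triangle inequality the inner integral becomes independent of $t$ by a shift, and the weight $t(T-t)/T^2$ integrates to exactly $T/6$. So the sharp constant comes from the right parametrization plus the plain triangle inequality --- there is no further ``vector cancellation'' to exploit, contrary to your diagnosis. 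Because your intermediate inequality is strictly lossy, analysing \emph{its} equality case tells you nothing about equality in the lemma: the tent on $\mathbb{R}$ attains equality in the lemma while your step is strict, so ``chasing the equalities back'' through your chain identifies the wrong rigidity condition.

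The second gap is the collinear case. You concede that on a line the tent profile attains equality and you exclude line orbits only at the level of the Theorem, via the (correct) observation that an autonomous Lipschitz field admits no non-constant periodic orbit on a segment. But the Lemma itself asserts strict inequality for every non-constant admissible $y$ in a strictly convex space, and $\mathbb{R}$ (hence any line) is strictly convex, so your argument proves a strictly weaker statement. The paper closes the collinear case \emph{inside} the lemma by using continuity of $\dot y$ (which its proof uses throughout, though the statement only demands integrability of $\|\dot y\|$): once $\dot y(t)=h(t)\mathbf{v}$, the continuous mean-zero function $h$ vanishes at some $s$, and letting $t\to0$ in the equality condition forces $\int_0^T|h(s-r)|\,\d r=0$, i.e.\ $y$ constant. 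Your tent example is in fact a sharp observation --- it shows the continuity hypothesis on $\dot y$ is essential, since the tent's derivative jumps --- but under that hypothesis the collinear equality you leave standing does not occur, and a complete proof must rule it out rather than defer to the ODE structure. Finally, the upgrade from pairwise collinearity of velocity differences to a single global direction, which you flag as an obstacle without resolving, is carried out in the paper by a partition argument ($b_i,c_i$), iterated strict convexity, and a small-$t$ limit; as it stands your proposal neither derives the inequality with the correct constant nor establishes strictness in the generality the lemma claims.
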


Before we go into details of the proof, we show how Busenberg, Fisher \& Martelli used it to establish $TL\geq 6$ for any Banach space.

\begin{proof}[Proof of Theorem \ref{main result}]
Applying Lemma \ref{BanachLemma} and using the Lipschitz continuity of $f$, it follows that
\begin{eqnarray*}
\int_0^T\int_0^T || x(t)- x(s)||\d s\;\d t &\leq & \frac{T}{6} \int_0^T\int_0^T ||\dot{ x}(t)-\dot{ x}(s)||\d s\;\d t\\
&=& \frac{T}{6}\int_0^T\int_0^T ||f( x(t))-f( x(s))||\d s\;\d t\\ &\leq & \frac{LT}{6}\int_0^T \int_0^T || x(t)- x(s)||\d s\;\d t.
\end{eqnarray*}
Dividing both sides of the inequality by $\int_0^T\int_0^T || x(t)- x(s)||\d s\;\d t$ yields the result.
\end{proof}

We now turn to the main proof of this section.

\begin{proof}[Proof of Lemma \ref{BanachLemma}]\label{Integral Lemma Martelli}
We know that $y$ is periodic with period $T$. Hence its integral over one period is shift invariant and thus $$\int_0^T \int_0^T \left\Vert y(t+s)-y(s)\right\Vert \d s\d t=\int_0^T \int_0^T \left\Vert  y(t)-y(s)\right\Vert \d s\d t.$$
Using the above observation, we can derive the following integral expression

\begin{eqnarray}\label{inequality martelli}
\int_0^T \hspace{-0.2cm} \int_0^T  \hspace{-0.1cm}\left\Vert y(t)-y(s)\right\Vert \d s\;\d t \hspace{-0.2cm} \nonumber
&=&\hspace{-0.3cm}\int_0^T \hspace{-0.2cm} \int_0^T  \hspace{-0.05cm}\left\Vert y(t+s)-y(s)\right\Vert \d s\; \d t\nonumber\\
&=&\hspace{-0.3cm}\int_0^T \hspace{-0.2cm}\int_0^T \hspace{-0.05cm}\frac{(T-t)t}{T} \left\Vert\frac{y(t+s)-y(s)}{t}-\frac{y(s)-y(s+t-T)}{T-t}\right\Vert \d s\; \d t\nonumber\\
&=&\hspace{-0.3cm}\int_0^T \hspace{-0.2cm}\int_0^T \frac{(T-t)t}{T^2}\left\Vert \int_0^{T} \dot{y}\left(s+\frac{tr}{T}\right)- \dot{y}\left(s+\frac{tr}{T}-r\right)\d r\right\Vert \d s\; \d t\nonumber\\
&\leq &\hspace{-0.3cm}\int_0^T \hspace{-0.2cm} \int_0^T \frac{(T-t)t}{T^2}\int_0^T \left\Vert \dot{y}\left(s+\frac{tr}{T}\right)-\dot{y}\left(s+\frac{tr}{T}-r\right)\right\Vert \d r\;\d s\; \d t\\
&= &\hspace{-0.3cm}\int_0^T \hspace{-0.05cm} \frac{(T-t)t}{T^2}\int_0^T \hspace{-0.2cm}\int_0^T \left\Vert \dot{y}\left(s+\frac{tr}{T}\right)-\dot{y}\left(s+\frac{tr}{T}-r\right)\right\Vert \d s\; \d r\; \d t\nonumber
\end{eqnarray}
The last inner integral has been taken over one period, so we may shift it by $tr/T$ in order to obtain
\begin{eqnarray*}
\int_0^T\int_0^T \left\Vert y(r)-y(s)\right\Vert \d r\;\d s &\leq & \int_0^T \frac{(T-t)t}{T^2}dt \int_0^T \int_0^T \left \Vert \dot{y}(s+r)-\dot{y}(s)\right \Vert \d s\; \d r\\
&=& \frac{T}{6} \int_0^T \int_0^T \left \Vert \dot{y}(r)-\dot{y}(s)\right \Vert \d s\; \d r
\end{eqnarray*}
giving us the desired inequality for arbitrary Banach spaces.\\

From now on we consider the case when $X$ is in fact a strictly convex Banach space. The only actual inequality in the above argument occurs in line (\ref{inequality martelli})
where we use the triangle inequality for the Banach space $X$. Note that in doing so, we have a weight
$$\frac{(T-t)t}{T^2}$$
in front of the inner integral which vanishes at $t = 0,T$. In particular, if we show that this inequality actually has to be strict for some $s$ and some $0 < t < T$, our statement follows. Additionally, because of the weight, these conditions are tight as the triangle inequality could fail to be strict at $t = 0,T$ without causing the chain of inequalities to become strict. Note that from the continuity of $\dot{y}(t)$ we obtain that the functions
$$ (s,t) \rightarrow \left\|\int_{0}^{T}\dot{y}\left(s+\frac{tr}{T}\right) -\dot{y}\left(s+\frac{tr}{T}-r\right)\d r\right\|$$
$$ (s,t) \rightarrow \int_{0}^{T}\left\|\dot{y}\left(s+\frac{tr}{T}\right) -\dot{y}\left(s+\frac{tr}{T}-r\right)\right\| \d r$$
are continuous as well. Fix $s$ and $0 < t < T$, fix an arbitrarily fine decomposition $0 = a_0 < a_1 < \dots < a_n = T$ and abbreviate
$$ b_i := \int_{a_i}^{a_{i+1}}\dot{y}\left(s+\frac{tr}{T}\right) \qquad \mbox{and} \qquad c_i := \int_{a_i}^{a_{i+1}}\dot{y}\left(s+\frac{tr}{T}-r\right).$$

If there is in fact equality in (1), then we need to have equality in every step of iteratively applying the triangle inequality and thus
\begin{eqnarray*}
\left\|\sum_{i=0}^{n-1}{b_i-c_i}\right\| &=& \left\|b_0 - c_0\right\| + \left\|\sum_{i=1}^{n-1}{b_i-c_i}\right\| \\
&=& \left\|b_0 - c_0\right\| + \left\|b_1 - c_1\right\| + \left\|\sum_{i=2}^{n-1}{b_i-c_i}\right\| \\
&=& \dots \\
&=& \sum_{i=0}^{n-1}{\left\|b_i-c_i\right\|}.
\end{eqnarray*}
W.l.o.g. we assume that all the terms satisfy $b_{i}-c_{i} \neq \textbf{0}$. Strict convexity implies in the last line of this argument that
$b_{n-2} - c_{n-2}$ and $b_{n-1} - c_{n-1}$ are collinear. By the same reasoning $b_{n-3} - c_{n-3}$ and $(b_{n-2} - c_{n-2}) + (b_{n-1} - c_{n-1})$
are collinear, however, the last expression itself is collinear to $b_{n-2}-c_{n-2}$ as well as $b_{n-1}- c_{n-1}$. Iterating this argument shows
that all $b_{i}-c_{i}$ are necessarily collinear. Using the continuity of $\dot{y}(t)$, making the partition sufficiently small and applying the
fundamental theorem of calculus, we can deduce that for every fixed $s$ and $0 < t < T$ there exists a vector $\textbf{v} \in X$ and a function
$g:[0,T] \rightarrow \mathbb{R}$ such that for all $0 \leq r \leq T$
\begin{equation}\dot{y}\left(s+\frac{tr}{T}\right) - \dot{y}\left(s+\frac{tr}{T}-r\right) = g(r)\textbf{v}. \label{fund}\end{equation}
 Note, however, that both $g$ and $\textbf{v}$ depend on the previously fixed $s,t$. Since $y$ is not constant, it is possible to find and fix an $s$ such that
$$ \dot y(s) \neq \textbf{0}.$$
We now claim that this already implies that for all $0\leq r\leq T$
$$\dot{y}(s+r)  = \tilde{g}(r)\textbf{v} + \dot{y}(s).$$
Suppose this was false, then there is an $r$ such that
$$ \dot{y}(s+r) \notin \left\{\dot y(s) + \lambda \textbf{v}| \lambda \in \mathbb{R}\right\}.$$
In particular,
$$ \min_{\lambda \in \mathbb{R}}{\|\dot{y}(s+r) - \dot y(s) + \lambda \textbf{v}\|} > 0.$$
This, however, can be seen to contradict (\ref{fund}) by taking $t$ sufficiently small.\\

\noindent Since $y$ is periodic with period $T$,
$$ \int_{0}^{T}{\dot{y}(s+r)\d r} = \textbf{0} = \left(\int_{0}^{T}{\tilde{g}(r)\d r}\right)\textbf{v} + T\dot{y}(s).$$
This implies that $\dot{y}(s)$ is a scalar multiple of $\textbf{v}$, in which case
$$ \dot{y}(s+r) = \left(\tilde{g}(r) - \frac{1}{T}\int_{0}^{T}{\tilde{g}(r)\d r}\right)\textbf{v}.$$
This establishes that $\dot y(t)$ is one-dimensional, that is
$$ \dot y(t) = h(t) \textbf{v}$$
for some $\textbf{v} \neq \textbf{0}$ and a continuous, $T-$periodic function $h:[0,T] \rightarrow \mathbb{R}$.\\

Going back to an earlier stage of the argument, we had that for any fixed $s$ and $0<t<T$ the application of
the triangle inequality needs to be strict, that is
$$ \left\|\int_{0}^{T}\dot{y}\left(s+\frac{tr}{T}\right) -\dot{y}\left(s+\frac{tr}{T}-r\right)\d r\right\| = \int_{0}^{T}\left\|\dot{y}\left(s+\frac{tr}{T}\right) -\dot{y}\left(s+\frac{tr}{T}-r\right)\right\| \d r.$$
Plugging in the relation $ \dot y(t) = h(t) \textbf{v}$,
we require that for any fixed $s,t$ with $0 < t < T$
$$ \left| \int_{0}^{T}{h\left(s+\frac{tr}{T}\right) - h\left(s+\frac{tr}{T}-r\right)\d r} \right| = \int_{0}^{T}{\left| h\left(s+\frac{tr}{T}\right) - h\left(s+\frac{tr}{T}-r\right)\right| \d r}.$$
However, since $h$ is continuous and
$$ \int_{0}^{T}{h(z)\d z} = 0,$$
$h$ has to vanish in a point, say $h(s) = 0$. For $t$ very small, we have
$$ \lim_{t \rightarrow 0}\left| \int_{0}^{T}{h\left(s+\frac{tr}{T}\right) - h\left(s+\frac{tr}{T}-r\right)\d r} \right| = \left| \int_{0}^{T}{ h(s-r)\d r} \right| = 0$$
while
$$ \lim_{t \rightarrow 0}\int_{0}^{T}{\left| h\left(s+\frac{tr}{T}\right) - h\left(s+\frac{tr}{T}-r\right)\right| \d r} = \int_{0}^{T}{\left| h(s-r)\right| \d r},$$
proving that $h \equiv 0.$ 
\end{proof}

\section{A generalised form of Wirtinger's inequality}
The second part of this paper is devoted to establishing explicit bounds for a certain class of $\ell^p$-spaces. The idea of our approach goes back to the proof
that $TL\geq 2\pi$ in any Hilbert space which is based on an analogue of Wirtinger's inequality for Hilbert spaces. In the following we adapt
 this idea by using the work of Croce \& Dacorogna \cite{crocedac} who found the optimal constant in a generalised
set of Wirtinger inequalities, including the case of interest to us here. They showed that for
$$
u\in \left\{W^{1,p}_{\rm per}(0,1) \mbox{ with }\int_0^1 u(t)\,\d t=0  \mbox{ and } u(0)=u(1)\right\},
$$
where $W^{1,p}_{\rm per}$ is the space of $L^p$-functions $u$ whose weak first derivative lies in $L^p$, one has
$$
\left(\int_0^1|u(t)|^p\right)^{1/p}\leq C_p\left(\int_0^1|\dot{u}(t)|^p\,\d t\right)^{1/p},
$$
where
\begin{eqnarray}\label{Cpis}
C_p=\frac{p}{4(p-1)^{1/p}\int_0^1 t^{-\frac{1}{p}}(1-t)^{\frac{1}{p}-1}\,\d t}
\end{eqnarray}
is sharp. (Note that the integral appearing in the denominator is in fact the beta function $B(1/p',1/p)$ where $p'$ is the H\"older conjugate of $p$. Croce and Dacorogna consider functions defined on $(-1,1)$ but the form of the inequality here is more suitable for us in what follows.)

\begin{cor}
\label{WirtingersInequality}
Let $u\in W^{1,p}_{\rm per}([0,T], X)$ where $X$ is either $\ell^p(\mathbb{R}^n)$ or $L^p(M,\mu)$ and assume that $\int_0^T u(t)\,\d t=0$. Then
\begin{equation}\label{inLp}
\int_0^T\|u(t)\|^p_{X}\,\d t\leq C_p^p T^p\int_0^T\|\dot{u}(t)\|^p_{X}\,\d t,
\end{equation}
where $C_p$ is given in (\ref{Cpis}) and is optimal.
\end{cor}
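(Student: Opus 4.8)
The plan is to exploit the special structure of the spaces $\ell^p(\mathbb{R}^n)$ and $L^p(M,\mu)$, namely that the $p$-th power of the norm is itself a finite sum (respectively an integral) of the $p$-th powers of the scalar ``components'', and thereby reduce the vector-valued inequality to the scalar Croce--Dacorogna inequality applied coordinatewise (respectively slicewise). First I would dispose of the scaling: writing $v(\tau)=u(T\tau)$ for $\tau\in[0,1]$ turns the zero-mean, periodic function $u$ on $[0,T]$ into a zero-mean, periodic function on $[0,1]$ with $\dot v(\tau)=T\dot u(T\tau)$, and a change of variables shows that the scalar inequality on $[0,1]$ is equivalent to
$$\int_0^T|u(t)|^p\,\d t\le C_p^p\,T^p\int_0^T|\dot u(t)|^p\,\d t$$
for every scalar zero-mean $u\in W^{1,p}_{\rm per}([0,T])$. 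This is the one-dimensional ingredient I will feed into each component.

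For $X=\ell^p(\mathbb{R}^n)$ the argument is then immediate. Writing $u(t)=(u_1(t),\dots,u_n(t))$, the hypothesis $\int_0^T u(t)\,\d t=\textbf{0}$ holds coordinatewise, so each $u_j$ is a scalar zero-mean element of $W^{1,p}_{\rm per}([0,T])$. Applying the scaled scalar inequality to each $u_j$ and summing over $j$, I would interchange the finite sum with the $t$-integral to obtain
$$\int_0^T\sum_{j=1}^n|u_j(t)|^p\,\d t\le C_p^p\,T^p\int_0^T\sum_{j=1}^n|\dot u_j(t)|^p\,\d t,$$
which is exactly (\ref{inLp}) since $\|u(t)\|_{\ell^p}^p=\sum_j|u_j(t)|^p$ and $\|\dot u(t)\|_{\ell^p}^p=\sum_j|\dot u_j(t)|^p$.

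The case $X=L^p(M,\mu)$ follows the same pattern but requires the one genuinely technical step. Using the canonical identification $L^p([0,T],L^p(M,\mu))\cong L^p([0,T]\times M)$, I would view $u$ as a scalar function $u(t,\omega)$ and argue that its Bochner weak derivative $\dot u$ coincides with the joint weak derivative $\partial_t u(t,\omega)$; the heart of the matter is then to show that for $\mu$-almost every $\omega$ the slice $t\mapsto u(t,\omega)$ lies in $W^{1,p}_{\rm per}([0,T])$, with weak derivative $\partial_t u(\cdot,\omega)$ and with mean zero (the latter because $\int_0^T u(t,\omega)\,\d t=0$ for a.e.\ $\omega$ by the vanishing of the $X$-valued integral). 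Granting this, I apply the scaled scalar inequality for a.e.\ fixed $\omega$, integrate in $\omega$ against $\d\mu$, and invoke Tonelli (both $|u|^p$ and $|\dot u|^p$ are integrable on $[0,T]\times M$ by hypothesis) to swap the order of integration, recovering (\ref{inLp}). I expect this slicing/Fubini justification---passing between the Bochner derivative and the pointwise-in-$\omega$ weak derivative---to be the main obstacle; everything else is bookkeeping.

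Finally, optimality of $C_p$ needs only a single near-extremal example in each space, obtained by concentrating the scalar extremizer. For $\ell^p(\mathbb{R}^n)$ I take $u(t)=(w(t),0,\dots,0)$, and for $L^p(M,\mu)$ I take $u(t,\omega)=w(t)\phi(\omega)$ with $\phi$ a fixed normalized function supported on a set of finite positive measure, where in both cases $w$ is a scalar (near-)extremizer for the sharp Croce--Dacorogna inequality. Then $\|u(t)\|_X^p$ and $\|\dot u(t)\|_X^p$ reduce to $|w(t)|^p$ and $|\dot w(t)|^p$ (up to the factor $\|\phi\|_{L^p}^p=1$), so (\ref{inLp}) collapses to the scalar inequality and no constant smaller than $C_p$ can work.
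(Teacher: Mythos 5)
Your proposal is correct and follows essentially the same route as the paper: rescale to $T=1$, apply the scalar Croce--Dacorogna inequality coordinatewise for $\ell^p(\mathbb{R}^n)$ and slicewise (via Fubini/Tonelli) for $L^p(M,\mu)$, and obtain optimality from the one-component example $u=(w,0,\dots,0)$ and the separated example $w(t)\phi(x)$, exactly as the paper does with $f(t)\mathbf{1}_A$. The only difference is that you spell out the measure-theoretic justification (identifying the Bochner derivative with the joint weak derivative and checking that almost every slice is a zero-mean element of $W^{1,p}_{\rm per}$), a step the paper's proof leaves implicit.
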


\begin{proof}
By a simple change of variables it suffices to prove the result for $T=1$. When $X=\ell^p(\R^n)$ we have
\begin{eqnarray*}
    \int_{0}^1\sum_{j=1}^n|u_j(t)|^p\,\d t&=&\sum_{j=1}^n \int_{0}^1|u_j(t)|^p\,\d t\le C_p^p\sum_{j=1}^n\int_{0}^1|\dot u_j(t)|^p\,\d t,
\end{eqnarray*}
from which (\ref{inLp}) is immediate. One can see that the constant is optimal by considering $u=(u_1,\ldots,u_n)$ with $u_1\in W^{1,p}_{per}(0,1)$ and $u_j=0$ for $j=2,\ldots,n$.

Similarly, for $X=L^p(M,\mu)$ we have
\begin{eqnarray*}
\int_{0}^1\int_U|u(x,t)|^p\,\d\mu\,\d t&=&\int_U\int_{0}^1|u(x,t)|^p\,\d t\,\d\mu\\
&\le& C_p^p\int_U\int_{0}^1|\dot u(x,t)|^p\,\d t\,\d\mu=C_p^p\int_{0}^1\int_U|\dot u(x,t)|^p\,\d\mu\,\d t,
\end{eqnarray*}
and (\ref{inLp}) follows once more. Optimality of the constant follows by taking $f(t,x)=f(t){\bf 1}_A$ for some $f\in W^{1,p}_{per}(0,1)$ and $A\subset U$ with $\mu(A)>0$.
\end{proof}

\section{Improved lower bounds in $\ell^p(\mathbb{R}^n)$ and $L^p(M,\mu)$}
\label{Better Bounds for lp}

Having established Wirtinger's inequality for $W^{1,p}_{\rm per}([0,T],X)$ where $X$ is either $\ell^p(\mathbb{R}^n)$ or $L^p(M,\mu)$, we can now prove
 the second contribution of this paper. The simple proof is essentially that for $p=2$ due to \cite{busfishmar} which is a particular case of this result if one notes that $C_2^{-1}=2\pi$.

\begin{thm}\label{Minimal bound for L^p using Poincare}
Let $x$ be a non-constant $T$-periodic solution to $\dot{ x}=f( x)$ in either $X=\ell^p(\mathbb{R}^n)$ or $X=L^p(M,\mu)$. Further, suppose that $f$ is Lipschitz continuous from $X$ into $X$ with Lipschitz constant $L$. Then
\begin{equation}\label{TLC}
TL\ge C_p^{-1}.
\end{equation}
\end{thm}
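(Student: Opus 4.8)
The plan is to transcribe the Hilbert-space argument of \cite{busfishmar} into the $\ell^p$/$L^p$ setting, with Corollary \ref{WirtingersInequality} taking over the role played by the classical Wirtinger inequality when $p=2$. The crucial choice is to apply the inequality not to $x$ itself but to its derivative $\dot x$. First I would record that, since $f$ is Lipschitz it is bounded on the bounded orbit, so $\dot x=f(x)$ is bounded and $x$ is Lipschitz in $t$; consequently $t\mapsto f(x(t))$ is Lipschitz as a composition of Lipschitz maps. Moreover $\dot x$ is $T$-periodic with vanishing mean, because
$$\int_0^T \dot x(t)\,\d t = x(T)-x(0)=\mathbf{0}.$$
Thus $u:=\dot x$ is an admissible input for Corollary \ref{WirtingersInequality}, which yields
$$\int_0^T \|\dot x(t)\|^p_X\,\d t \le C_p^p\, T^p \int_0^T \|\ddot x(t)\|^p_X\,\d t.$$

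The second ingredient is a pointwise bound on $\ddot x$ coming straight from the Lipschitz hypothesis. For $h\neq 0$ one has
$$\frac{\|f(x(t+h))-f(x(t))\|_X}{|h|}\le L\,\frac{\|x(t+h)-x(t)\|_X}{|h|},$$
and letting $h\to 0$ at a point of differentiability gives $\|\ddot x(t)\|_X\le L\|\dot x(t)\|_X$ for almost every $t$, hence $\int_0^T \|\ddot x\|^p_X\,\d t \le L^p\int_0^T \|\dot x\|^p_X\,\d t$. Combining the two displays produces
$$\int_0^T \|\dot x(t)\|^p_X\,\d t \le C_p^p\, T^p L^p\int_0^T \|\dot x(t)\|^p_X\,\d t.$$
Because $x$ is non-constant, $\dot x\not\equiv\mathbf{0}$ and $\int_0^T\|\dot x\|^p_X\,\d t>0$, so dividing through and taking $p$-th roots gives $1\le C_p\, T L$, which is exactly (\ref{TLC}).

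The step that requires the most care — and the only place I expect a genuine obstacle — is the regularity needed to invoke Corollary \ref{WirtingersInequality} on $\dot x$ and to give $\ddot x$ meaning as an $L^p$-valued function. What one must know is that the Lipschitz curve $t\mapsto\dot x(t)=f(x(t))$ is differentiable almost everywhere with derivative in $L^p([0,T],X)$, and that this almost-everywhere derivative coincides with the weak derivative, so that $\dot x\in W^{1,p}_{\mathrm{per}}([0,T],X)$. This is automatic when $X=\ell^p(\R^n)$, since that space is finite-dimensional; for $X=L^p(M,\mu)$ with $1<p<\infty$ it follows from reflexivity, which guarantees the Radon--Nikod\'ym property, under which Lipschitz curves into $X$ are differentiable almost everywhere, are recovered as the integral of that derivative, and have derivative in $L^\infty\subset L^p$. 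Once this point is settled, the remainder of the argument is precisely the $p=2$ proof with $2\pi=C_2^{-1}$ replaced by $C_p^{-1}$.
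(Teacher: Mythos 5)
Your proof is correct on most of the theorem's range, but it takes a genuinely different route from the paper, and the difference matters at one endpoint. The paper never differentiates the equation: it applies Corollary \ref{WirtingersInequality} to the shifted difference $u_h(t)=x(t+h)-x(t)$, which has zero mean by periodicity and lies in $W^{1,p}_{\rm per}([0,T],X)$ simply because $x$ is $C^1$ (automatic for a solution of $\dot x=f(x)$ with $f$ continuous); then $\dot u_h(t)=f(x(t+h))-f(x(t))$, the Lipschitz bound closes the loop, and one divides by $\int_0^T\|x(t+h)-x(t)\|_X^p\,\d t$, which is positive for a suitable $h$ since $x$ is non-constant. You instead apply the Wirtinger inequality to $\dot x$ itself, which forces you to give meaning to $\ddot x$. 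Within its scope your argument is sound: the zero-mean check $\int_0^T\dot x=x(T)-x(0)=\mathbf{0}$, the pointwise bound $\|\ddot x(t)\|_X\le L\|\dot x(t)\|_X$ at points of differentiability, and the division by $\int_0^T\|\dot x\|_X^p\,\d t>0$ are all correct, and your route has the small advantage of needing no choice of shift $h$. Your regularity discussion via the Radon--Nikod\'ym property is also correct for the finite-dimensional spaces $\ell^p(\R^n)$ and for the reflexive spaces $L^p(M,\mu)$ with $1<p<\infty$.

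The genuine gap is $X=L^1(M,\mu)$, which the statement of the theorem formally includes (the limiting value of (\ref{Cpis}) gives $C_1^{-1}=4$). There the Radon--Nikod\'ym property fails, and a Lipschitz curve into $L^1$ can be nowhere differentiable --- the standard example is $t\mapsto\chi_{[0,t]}$ in $L^1(0,1)$ --- so $\ddot x$ need not exist in any pointwise or weak sense and your second display is then meaningless; this is not a technicality one can patch within your approach. The paper's shift trick sidesteps the issue entirely, since it only uses the $C^1$ regularity that every solution automatically has. Because $C_1^{-1}=4$ is weaker than the universal bound $TL\ge 6$, the loss occurs at an uninteresting endpoint, but to prove the theorem as stated you should either restrict your argument to $1<p<\infty$ (and finite dimensions for $p=1$) or replace $\dot x$ by the difference $x(t+h)-x(t)$ as in the paper.
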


\begin{proof}
As the function $x$ is a solution to the ODE, it is differentiable by definition. Moreover, a simple calculation shows that
$$
\int_0^Tx(t+h)-x(t)\,\d t=0.
$$
Hence Wirtinger's inequality for $W^{1,p}_{per}((0,T),X)$ is applicable to $x(t+h)-x(t)$ and thus
\begin{eqnarray*}
\int_0^T\| x(t+h)- x(t)\|^p_X\,\d t&\le& C_p^pT^p \int_0^T \|\dot{ x}(t+h)-\dot{ x}(t)\|^p_X\,\d t\\
&=& C_p^p T^p\int_0^T \|f( x(t+h))-f( x(t))\|^p_X\,\d t\\
&\le& L^pC_p^pT^p \int_0^T \| x(t+h)- x(t)\|^p_X\,\d t.
\end{eqnarray*}
Dividing both sides by $\int_0^T \| x(t+h)- x(t)\|^p_X\,\d t$, which is non-zero as $x$ is non-constant, yields (\ref{TLC}).   
\end{proof}

 Theorem \ref{Minimal bound for L^p using Poincare} gives an improved lower bound on the product of Lipschitz constant $L$ and period $T$ for the spaces $\ell^p(\mathbb{R}^n)$ and $L^p(M,\mu)$ for a range of $p$ around $p=2$. Figure \ref{fig:BestConstant from generalised Wirtinger} plots $C_p^{-1}$ against $p$ for $1\leq p\leq 4$, and shows that $C_p^{-1}>6$ for $1.43 \leq p\leq 3.35$.\\

\begin{figure}[h!]
  \begin{center}
\includegraphics[width=0.7\columnwidth]{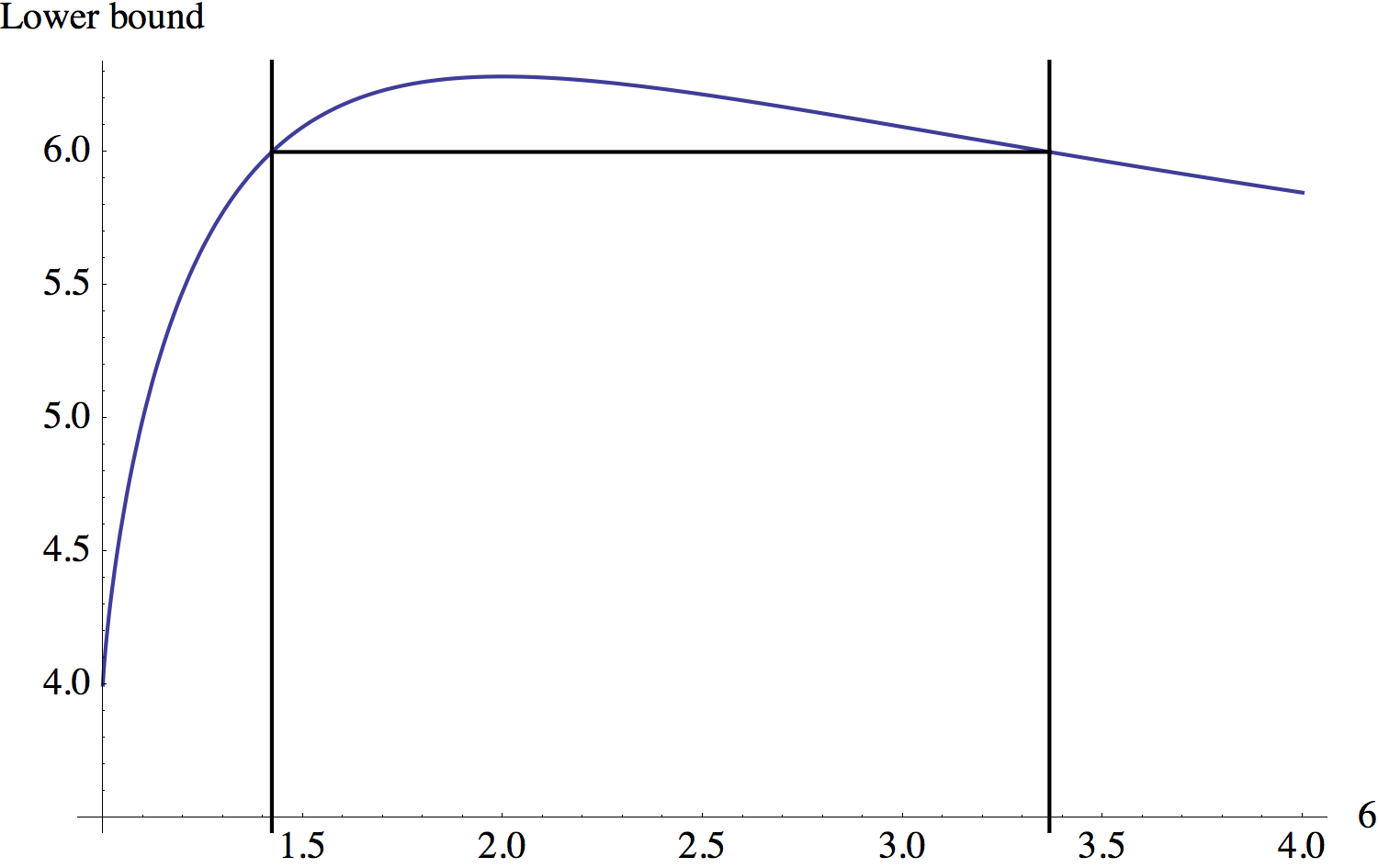}
\caption{Improved lower bound near $p=2$ using Wirtinger's inequality}
\label{fig:BestConstant from generalised Wirtinger}
\end{center}
\end{figure}

\textbf{Remark 1.} For $1\leq p<\infty$ one can construct an example of an ODE in $L^{p}(M,\mu)$ satisfying Lipschitz conditions on its derivative with period $2\pi$.
Suppose there are two sets $A\cap B= \emptyset$ such that
\[
0 < \mu(A) =\mu(B).
\]
and consider the ODE
$$\dot{z}=f(z)$$
with $f:L^{p}(M,\mu) \rightarrow L^{p}(M,\mu)$ given by
\[
f(z)= -\frac{\chi_{B}}{\mu(A)}\int_{A}{z d\mu} + \frac{\chi_{A}}{\mu(B)}\int_{B}{z d\mu}.
\]
Then H\"older's inequality gives that for $L=1$ the quantity
$$ I = \|f(z)-f(w)\|^p_{L^{p}(M,\mu)}$$
satisfies
\begin{eqnarray*}
\hspace{-1cm} I   \hspace{-0.2cm}&=&\hspace{-0.2cm} \left\| - \chi_B \frac{1}{\mu(A)}\int_{A}{z-w d\mu} + \chi_{A}\frac{1}{\mu(A)}\int_{B}{z-w d\mu} \right\|_{L^p}^p \\
\hspace{-0.2cm}&=&\hspace{-0.2cm} \left(\frac{1}{\mu(A)}\int_{A}{z-w d\mu}\right)^p \mu(B) + \left(\frac{1}{\mu(B)}\int_{B}{z-w d\mu}\right)^p \mu(A) \\
\hspace{-0.2cm}&\leq&\hspace{-0.2cm} \frac{1}{\mu(A)^p} \left(\int_{A}{|z-w|^p d\mu}\right) \mu(A)^{p-1}\mu(B) + \frac{1}{\mu(B)^p}\left(\int_{B}{|z-w|^p d\mu}\right) \mu(B)^{p-1}\mu(A) \\
\hspace{-0.2cm}&\leq&\hspace{-0.2cm} \|z-w\|_{L^p}^{p}
\end{eqnarray*}
and one explicit $2\pi-$periodic solution is given by
$$ z(t) = -(\cos{t})\chi_A + (\sin{t})\chi_B.$$
Notice that this example can be generalised further to the case when $0 < \mu(A) \neq \mu(B)$.\\

\textbf{Remark 2.} Let $X$ be a Banach space which obeys `almost' a Hilbert space structure in the sense of the norm, that is there exists a $\varepsilon>0$ such that
$$(1-\varepsilon) \| x \|_H \leq \| x \|_X \leq (1+\varepsilon) \| x \|_H.$$
Let $x:\mathbb{R}\rightarrow X$ be a $T$-periodic solution to the ODE $\dot{x}=f(x)$ with $f$ being Lipschitz continuous with Lipschitz constant $L$. Since \begin{eqnarray*}
\|f(x) - f(y)\|_H \leq \frac{1}{1-\varepsilon} \| f(x) - f(y) \|_X \leq \frac{1}{1 - \varepsilon} L \| x - y\|_X \leq \frac{1+\varepsilon}{1-\varepsilon} L \|x - y\|_H,
\end{eqnarray*}
it follows that $f$ is also Lipschitz continuous with respect to the Euclidean norm with Lipschitz constant $L'=\frac{1+\varepsilon}{1-\varepsilon}L$. At the same time,
the length of the curve $x$ as measured in the Hilbert space is smaller than $(1+\varepsilon)T$ and using the fact that $c=2\pi$ in any Hilbert space we may conclude
that $$TL\geq 2\pi\frac{1-\varepsilon}{(1+\varepsilon)^2}.$$
However, this approximation lags behind the numerical results for $\ell^p$ obtained at the beginning of this section, especially for high dimensions.\\

\textbf{Remark 3.} Dvoretzky's theorem in \cite{dvor} guarantees that for any $\varepsilon > 0$ there exists $n \in \mathbb{N}$ sufficiently large such that any Banach space
with $\dim X \geq n$ contains a two-dimensional subspace with Banach-Mazur distance to $\ell_2^2$ at most $1+\varepsilon$. The example of a simple circle in $\ell^2_2$ realizes
$TL = 2\pi$. This means that in any Banach space $X$ it is possible to construct an ODE satisfying $TL \leq 2\pi + \varepsilon$, where $\varepsilon$ depends only on
the dimension of $X$. We do not know whether there is always an ODE for which $TL \leq 2\pi$.


\section{Conclusion}
As discussed in the introduction, the key question is what intrinsic property of a space $X$ determines the largest (and hence best) constant $C_X$ such that $LT\ge C_X$. One of
 these intrinsic properties is strict convexity for which we have shown that the constant must be strictly larger than $6$. A natural question that arises is whether there exists
 a Banach space in which the optimal constant is neither $6$ nor $2\pi$.\\

However, explicit bounds are difficult to obtain. Even in the simple case $X=\ell^p(\R^n)$ this is not known, although our simple argument gives an explicit lower bound for
$p$ around $p=2$. It is interesting that a simple calculation shows that $C_p=C_{p'}$ when $p$ and $p'$ are conjugates; but it is not known whether the optimal constants
 in $\ell^p$ and $\ell^{p'}$ do in fact coincide (this interesting question was suggested to one of us in a personal communication from Mario Martelli).\\

While the use of an $L^p$-based Wirtinger inequality suits the $\ell^p$-spaces, there is no reason why these exponents should match. Given a Banach space $X$ it would be interesting to determine the optimal constants in the family of inequalities
$$
\left(\int_0^1 \| u(t)\|^p_X\,\d t\right)^{1/p}\leq C_p(X)\left(\int_0^T \|\dot{ u}(t)\|^p_X\,\d t\right)^{1/p},
$$
noting that as a consequence of such a family of inequalities and the argument of Theorem \ref{Minimal bound for L^p using Poincare} one would obtain
$$
TL\ge\sup_p C_p(X)^{-1}.
$$

\section*{Acknowledgement}
JCR is supported by an EPSRC Leadership Fellowship, grant number EP/G007470/1. MACN gratefully acknowledges funding from the European Research Council under the European
Union's Seventh Framework Programme (FP7/2007-2013) / ERC grant agreement n$^{\circ}$ 291053. SS is supported by a Hausdorff scholarship of the Bonn International
Graduate School.


\begin{thebibliography}{5}

\bibitem{busmar} S.N. Busenberg and M. Martelli, Bounds for the period of periodic orbits of dynamical systems. {\it Journal of Differential Equations} {\bf 67}, 359--371, 1987.

\bibitem{busfishmar}  S.N. Busenberg, D.C. Fisher, and M. Martelli, Better bounds for periodic solutions of differential equations in Banach spaces.
{\it Proceedings of the American Mathematical Society} {\bf 98}, 376--378, 1986.

\bibitem{busfishmar2} S.N. Busenberg, D.C. Fisher, and M. Martelli, Minimal periods of discrete and smooth orbits. {\it The American Mathematical Monthly} {\bf 96}, 5--17, 1989.

\bibitem{crocedac} G. Croce and B. Dacorogna, On a generalized Wirtinger inequality. {\it Discrete and Continuous Dynamical Systems} {\bf 9}, 1329--1341, 2003.

\bibitem{dvor} A. Dvoretzky, Some results on convex bodies and Banach spaces. Proc. Internat. Sympos. Linear Spaces (Jerusalem, 1960). Jerusalem: Jerusalem Academic Press. pp. 123--160, 1961.

\bibitem{lasyor} A. Lasota and J.A. Yorke, Bounds for periodic solutions of differential equations in Banach spaces. {\it Journal of Differential Equations} {\bf 10}, 83--91, 1971.

\bibitem{yor} J.A. Yorke, Periods of Periodic Solutions and the Lipschitz Constant. {\it Proceedings of the American Mathematical Society} {\bf 22}, 509--512, 1969

\bibitem{zev} A. A. Zevin, Sharp estimates for the amplitudes of periodic solutions to Lipschitz differential equations. {\it Dokl. Akad. Nauk.} {\bf 78}, 596-- 600, 2008

\bibitem{zev2} A. A. Zevin, Minimal periods of solutions of Lipschitzian differential equations in a vector space with an arbitrary norm. {\it Dokl. Akad. Nauk.} {\bf 444}, 602--604, 2012.


\end{thebibliography}
\end{document}